\newtheorem{theorem}{Theorem}
\newtheorem{corollary}{Corollary}
\newtheorem{definition}{Definition}
\newtheorem{lemma}{Lemma}
\newcommand{\C}{\mathbb C}
\newcommand{\R}{\mathbb R}
\newcommand{\Z}{\mathbb Z}
\title{Conformal extensions of functions defined on arbitrary subsets of Riemann surfaces}
\author{P. M. Gauthier and V. Nestoridis}
\address{D\'epartement de math\'ematiques et de statistique, Universit\'e de Montr\'eal,
CP-6128 Centreville, Montr\'eal,  H3C3J7, CANADA}
\email{gauthier@dms.umontreal.ca}
\address{Department of Mathematics, University of Athens Panepisitmioupolis, 15784 Athens, Greece}
\email{vnestor@math.uoa.gr}
\keywords{analytic continuation, analytic arc} \subjclass[2010]{Primary: 30B40; Secondary: 30F99 }
\thanks{Research supported by NSERC (Canada)}
\begin{document}

\begin{abstract}
For a function defined on an arbitrary subset of a Riemann surface, we give conditions which allow the function to be extended conformally One folkloric consequence is that two common definitions of an analytic arc in $\C$ are equivalent.
\end{abstract}

\maketitle

The purpose of this note is to extend conformally a function given on an arbitrary subset of a Riemann surface. Our original motivation was to prove that a construction of Nestoridis and Zadik in \cite{NZ} holds more generally, so there is no need to use the specific form of the extended function used in \cite[Prop. 2.3, iii]{NZ}. A corollary of our result is that two common definitions of analytic arcs in $\C$ are equivalent.

\begin{definition}
A (topological or Jordan) open arc $J$ in a topological space $X$ is a subset $J\subset X,$ which is a homeomorphic image of the open unit interval
$$
    I = \{t\in\R: 0<t<1\},
$$
equivalently, of the real line.
Thus, if $J$ is an open arc, there is a parametrization $\varphi:I\rightarrow J,$ which is a homeomorphism.
\end{definition}

If we are considering an open arc in a Riemann surface $X,$ then topological notions (closure, boundary, etc.) will be with respect to $X.$ In particular, if we are considering an open arc in the Riemann sphere $\overline\C,$ then topological notions will be with respect to $\overline\C.$ For a holomorphic mapping $f:X\rightarrow Y$ between two Riemann surfaces and a point $p\in X,$ we write $f^\prime(p)=0$ to signify that, for some (hence any) local coordinate mappings $\varphi$ and $\psi$ at $p$ and $f(p)$ respectively, with $\varphi(p)=0,$ we have $(\psi\circ f\circ\varphi^{-1})^\prime(0)=0.$ In particular, either $X$ or $Y$ may be the Riemann sphere $\overline\C.$

For an open arc $J$ in $X$  with parametrization $\varphi,$
we define the initial end $J(0)$ and the terminal end $J(1)$ of $J$ as
$$
    J(0)=\bigcap_{0<t<1}\overline{\varphi(0,t]}, \quad J(1) = \bigcap_{0<t<1}\overline{\varphi[t,1)}.
$$
Since $\varphi$ is a homeomorphism onto $J$ and $J$ has the relative topology induced by $X,$ it follows that the open arc $J$ is disjoint from both of its ends.
Each end is a closed connected set. If the initial end is a point, we call this the initial point of the arc (even though it is not on the arc). Similarly, if the terminal end is a point, we call it the terminal point.

If $\varphi:I\rightarrow J$ is an open arc and $f:J\rightarrow\overline\C,$  we define the initial and terminal cluster sets of $f$ on $J:$
$$
    C_0(f,J) = \bigcap_{t\in I}\overline{\{f(\varphi(s)):0<s<t\}}, \quad
    C_1(f,J) = \bigcap_{t\in I}\overline{\{f(\varphi(s)):t<s<1\}}.
$$
The cluster set $C(f,J)$ is the union of the initial cluster set of $f$ and the terminal cluster set of $f.$ In other words,
$$
    C(f,J) = C_0(f,J)\cup C_1(f,J) = \bigcap_{\epsilon>0}\overline{(f\circ\varphi)(I\setminus[\epsilon,1-\epsilon])}.
$$

\begin{theorem}
Suppose $J$ is an open arc in $\overline\C$ and $f:U\rightarrow\overline\C$ is a holomorphic mapping on
an open neighborhood $U$ of $J$ in $\overline\C.$ Suppose $f|_J$ is injective, $f^\prime(z)\not=0,$ for $z\in J,$ and the sets $f(J)$  and $C(f,J)$ are disjoint. Then, $f$ is injective (one-to-one conformal) on some neighborhood of $J.$
\end{theorem}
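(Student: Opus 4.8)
The plan is to build the neighborhood by hand, thickening the local inverse branches of $f$ along $J$ in a controlled way, with all the difficulty concentrated at the two ends $J(0)$ and $J(1)$. First I would fix the local picture: since $f^\prime\ne 0$ on $J$ and $f^\prime$ is continuous, $f$ is a local homeomorphism on some open set $W$ with $J\subseteq W\subseteq U$ (and we may assume $W\ne\overline\C$, else we are done immediately), so each $p\in J$ lies in a spherical ball contained in $W$ on which $f$ is injective, with holomorphic local inverse. Let $r_0(p)$ be the supremum of the radii of such balls; a triangle‑inequality argument shows $r_0$ is finite, strictly positive on $J$, and $1$‑Lipschitz for the spherical metric, and I would work with $r(p)=\frac{1}{3}r_0(p)$.

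The crucial preliminary step, and the one using the cluster set hypothesis, is the claim that \emph{$f\circ\varphi\colon I\to f(J)$ is a homeomorphism onto $f(J)$}; equivalently $(f|_J)^{-1}$ is continuous, so $f(J)$ is again an open arc. To prove it I would observe that $f\circ\varphi$ is a continuous bijection onto $f(J)$ (using injectivity of $f|_J$ and of $\varphi$), and verify continuity of the inverse by contradiction: if $f(\varphi(t_k))\to f(\varphi(t_0))$ with $t_0\in I$ but $t_k\to t_*\ne t_0$ in $[0,1]$, then either $t_*\in I$, whence continuity of $f$ and injectivity of $f|_J$ force $t_*=t_0$, or $t_*\in\{0,1\}$, whence $f(\varphi(t_0))\in C_0(f,J)\cup C_1(f,J)=C(f,J)$, contradicting $f(J)\cap C(f,J)=\emptyset$.

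Granting this, I would construct the neighborhood as follows. For each $p\in J$ the set $f(B(p,r(p))\cap J)$ is a relative neighborhood of $f(p)$ in $f(J)$ (image of an open subset of $J$ under the homeomorphism $f|_J$), so pick $\epsilon(p)>0$ with $B(f(p),\epsilon(p))\cap f(J)\subseteq f(B(p,r(p))\cap J)$, and then, by continuity of $f$ at $p$, pick $\delta(p)\in(0,r(p))$ with $f(B(p,\delta(p)))\subseteq B(f(p),\epsilon(p)/3)$. I claim $V:=\bigcup_{p\in J}B(p,\delta(p))$, an open neighborhood of $J$ inside $W$, does the job. If $z\in B(p,\delta(p))$, $w\in B(q,\delta(q))$ and $f(z)=f(w)$, then the spherical distance from $f(p)$ to $f(q)$ is less than $\frac{1}{3}\epsilon(p)+\frac{1}{3}\epsilon(q)$; assuming $\epsilon(q)\le\epsilon(p)$, this puts $f(q)$ in $B(f(p),\epsilon(p))\cap f(J)\subseteq f(B(p,r(p))\cap J)$, and injectivity of $f|_J$ then forces $q\in B(p,r(p))$. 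Using the Lipschitz estimate on $r$ one checks that both $z$ and $w$ lie in a single ball about $p$ of radius strictly less than $r_0(p)$, on which $f$ is injective, so $z=w$.

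I expect the main obstacle to be precisely this passage from the compact sub‑arcs of $J$ to $J$ itself near $J(0)$ and $J(1)$. There nothing is controlled a priori — the ends may be arbitrary continua and $f$ need not extend across them — so no tube of uniformly positive radius around $J$ can be injective in general, and one is forced to let $\delta(p)\to 0$ as $p$ runs out to an end, at a rate governed by $\epsilon(p)$. Hence the real content is the homeomorphism statement: it is exactly the disjointness of $f(J)$ from the cluster set that prevents $f(J)$ from returning arbitrarily close to itself, and so makes the numbers $\epsilon(p)$ available.
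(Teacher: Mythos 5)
Your proof is correct, and after the first step it takes a genuinely different route from the paper's. The opening move is the same: both you and the authors show that $f\circ\varphi:I\to f(J)$ is a homeomorphism by the identical compactness-of-$[0,1]$ contradiction argument, with the hypothesis $f(J)\cap C(f,J)=\emptyset$ ruling out the endpoint cases. From there the paper proceeds topologically: it splits into cases according to whether $C_0(f,J)$ and $C_1(f,J)$ are disjoint, fills in the cluster sets, and uses the Schoenflies theorem (plus the Riemann mapping theorem in the second case) to straighten $f(J)$, so that $f(J)$ can be covered by Jordan domains $H(D_t)$ whose pairwise intersections are connected; local inverse branches then glue by the identity principle into a global branch of $f^{-1}$ on a neighborhood of $f(J)$. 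You instead run a purely metric argument: a $1$-Lipschitz injectivity radius $r_0$, the openness of $f|_J$ as a map onto $f(J)$ (which is where the homeomorphism step is cashed in, via the numbers $\epsilon(p)$), and a tube $\bigcup_p B(p,\delta(p))$ whose radii shrink toward the ends; the triangle-inequality bookkeeping you sketch does close up ($d(p,q)<r_0(p)/3$ and $r_0(q)<\tfrac{4}{3}r_0(p)$ give $d(p,w)<\tfrac{7}{9}r_0(p)$, so $z$ and $w$ lie in a common ball of radius below $r_0(p)$). Your version avoids Schoenflies, the Riemann mapping theorem, and the case analysis entirely, and it uses nothing about $J$ beyond the facts that $f$ is a local homeomorphism near $J$ and that $f|_J$ is a homeomorphism onto its image; it is thus essentially a proof of the paper's later Lemma 2 (Milnor) without the hypothesis that the set be closed, and would give Theorem 5 almost verbatim. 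What the paper's construction buys in exchange is an explicit global inverse branch on a geometrically transparent neighborhood of $f(J)$ built from a straightened arc. One small point worth tightening: the finiteness of $r_0$ needs no appeal to ``$W\ne\overline\C$, else we are done'' (which itself would require a Riemann--Hurwitz remark), since spherical radii are bounded anyway.
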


\begin{proof}
Assume, first of all, that the initial and terminal cluster sets $C_o(f,J)$ and $C_1(f,J)$ are disjoint.
Consider the mapping $\psi:I\rightarrow f(J),$ given as
$\psi=f\circ\varphi.$ As a composition of continuous injective mappings, $\psi$ is also continuous and injective. We claim that $\psi^{-1},$ (which is well-defined) is also continuous. Suppose for the sake of contradiction, that there is a sequence $\psi(t_j), t_j\in I,$ which converges to a point $\psi(\alpha), \alpha\in I,$ but $t_j\not\rightarrow\alpha.$ We may assume that $t_j$ converges to point $\beta\in[0,1].$ If $\beta\in I,$ then $\psi(t_j)\rightarrow\psi(\beta)\not=\psi(\alpha),$ which is a contradiction. If $\beta=0,$ then $t_j\rightarrow 0,$ so $\psi(\alpha)=\lim\psi(t_j)\in C_o(f,J),$ since $t_j\rightarrow 0,$ which again is a contradiction, since $f(J)$ is disjoint from $C_o(f,J).$ The same argument shows that $t_j$ cannot converge to $1.$ Thus, $\psi^{-1}$ is continuous and so $\psi$ is a homeomorphism. This shows that $f(J)$ is also an open arc.

Let $W_o$ be the component of $\overline\C\setminus C_o(f,J)$ which contains the connected set $f(J)\cup C_1(f,J)$ and let $W_1$ be the component of $\overline\C\setminus C_1(f,J)$ which contains the connected set $f(J)\cup C_o(f,J).$
Let $\widehat C_o(f,J)$ be the union of $C_o(f,J)$ with all of its complementary components in $\overline\C$ which do not meet the connected set $f(J)\cup C_1(f,J).$ We define $\widehat C_1(f,J)$ similarly. We may also say that $\widehat C_o(f,J)=\overline\C\setminus W_o$ and $\widehat C_1(f,J)=\overline\C\setminus W_1.$

The sets $f(J),$ $\widehat C_o(f,J)$ and $\widehat C_1(f,J)$ are pairwise disjoint. Both compact connected sets $\widehat C_o(f,J)$ and $\widehat C_1(f,J)$ have only one complementary component $W_o$ and $W_1$ respectively in $\overline\C$ which both contain $f(J).$
There exists a homeomorphism
$$
    h:\overline \C\setminus[\widehat C_o(f,J)\cup\widehat C_1(f,J)]\rightarrow\overline\C\setminus\{p_o,p_1\},
$$
mapping the topological annulus $\overline C\setminus[\widehat C_o(f,J)\cup\widehat C_1(f,J)]$ onto the twice punctured sphere $\overline\C\setminus\{p_o,p_1\},$ where $p_o$ and $p_1$ are distinct finite points. The homeomorphism $h$ maps the open arc $f(J)$ to the open arc $h(f(J)),$ whose initial and terminal points are respectively $p_o$ and $p_1.$
By composing with a M\"obius transformation, we may assume that the open arc $h(f(J))$ does not pass through $\infty.$
Let $\alpha_o$ and $\alpha_1$ be disjoint open arcs in $\C,$ where $\alpha_o$ joins $\infty$ to $p_o$ and $\alpha_1$ joins $p_1$ to infinity. Set
$$
    \alpha = \alpha_o\cup\{p_o\}\cup h(f(J)) \cup\{p_1\}\cup\alpha_1.
$$
The open arc $h(f(J))$  is the homeomorphic image of the open unit interval $I$ under the parametrization $h\circ f\circ\varphi$ with initial point $p_o$ and terminal point $p_1.$ We may extend this to a parametrization $\eta:(-\infty,+\infty)\rightarrow\alpha$ of the open arc $\alpha.$ By the Schoenflies Theorem
 \cite[page 81]{Gr}, we may further extend $\eta$ to a homeomorphism $\eta:\C\rightarrow\C.$ Let us denote the homeomorphism
 $$
    h^{-1}\circ\eta:\C\setminus\{0,1\}\rightarrow\C\setminus[\widehat C_o(f,J)\cup\widehat C_1(f,J)]
 $$
 by $H.$

Fix $t\in I.$ We may choose a closed disc $\overline D_t$  with center $t$ so small that $0,1\not\in\overline D_t$ and so $H$ maps $D_1$ homeomorphically onto a Jordan domain $W_t$ containing $f(\varphi(t)).$ Since  $f^\prime(\varphi(t))\not=0,$ we may choose a branch $g_t$ of $f^{-1}$ in a neighborhood of $f(\varphi(t)),$ such that $f^{-1}\circ f$ is the identity in a neighborhood of $\varphi(t).$ We may choose $D_t$ so small that the Jordan domain $H(D_t)$ is contained in the domain of definition of this inverse branch $g_t.$ We claim that these inverse branches, for various $t$ are compatible. Indeed, since $H$ is a homeomorphism, two Jordan domains $H(D_s)$ and $H(D_t)$ intersect if and only if the discs $D_s$ and $D_t$ intersect and in this case the intersection $H(D_s)\cap H(D_t)$ has only one component, which is $H(D_s\cap D_t).$ Since $f$ is injective on $J,$ the branches $g_s$ and $g_t$ agree on the (non-empty) arc of $f(J)$ in $H(D_s)\cap H(D_t).$ By the uniqueness principle, $g_s=g_t$ on $H(D_s)\cap H(D_t).$ We have verified that the inverse branches $g_t, t\in I,$ are compatible. Thus, we may define a branch $g$ of $f^{-1}$ on the neighborhood $W=\cup_{t\in I}H(D_t)$ of $f(J).$ We have that $f$ maps the open neighborhood $g(W)$ of $J$ biholomorphically onto the neighborhood $W$ of $f(J).$ This completes the proof, in case the initial and terminal cluster sets $C_o(f,J)$ and $C_1(f,J)$ are disjoint.

Suppose the initial and terminal cluster sets $C_o(f,J)$ and $C_1(f,J)$ are not disjoint.
The cluster set $C(f,J)$ is then a continuum or a point, so the open set $\overline\C\setminus C(f,J)$ is simply connected. In particular,  the component $\Omega$ of $\overline\C\setminus C(f,J)$ which contains the connected set $f(J)$ is simply connected. Set $E=\overline\C\setminus\Omega.$ If $E$ is a singleton, we may map $\Omega$ to $\C$ by a M\"obius transformation so that $E$ goes to $\infty.$
Suppose $E$ is a continuum. By the Riemann mapping theorem, we may assume that $\Omega$ is the unit disc and $f(J)$ is an open arc in $\Omega.$ There is a homeomorphism $h:\Omega\rightarrow\C$ so that $h(w)\rightarrow\infty,$ as $|w|\nearrow 1.$ Thus, whether $E$ is a singleton or not, there is a homeomorphism from $\Omega$ to $\C.$    After this mapping, the image of $f(J)$ is an open arc both ends of which are $\infty.$
We may parametrize $f(J)$ by the real line and use the Schoenflies theorem as above.
\end{proof}

This result extends to arcs on Riemann surfaces. For that purpose, the following lemma is useful and of independent interest.

\begin{lemma}
Let $J$ be an open arc in a Riemann surface $X.$ Then, $J$ has a fundamental system of simply connected neighborhoods.
\end{lemma}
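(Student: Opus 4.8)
The plan is to reduce the problem to a statement about arcs in the plane by a patching/exhaustion argument, then invoke the planar case. First I would fix a parametrization $\varphi\colon I\to J$ and, given an arbitrary open neighborhood $V$ of $J$ in $X$, aim to produce a simply connected open $G$ with $J\subset G\subset V$. For each $t\in I$ choose a coordinate disc $U_t$ around $\varphi(t)$ with $\overline{U_t}\subset V$ and biholomorphic to $\D$; by compactness of $\varphi[\epsilon,1-\epsilon]$, finitely many of these cover each such subarc, so one gets a countable locally finite chain $U_1,U_2,\dots$ of coordinate discs, ordered along $J$, with $U_n\cap U_{n+1}\neq\emptyset$ and $U_n\cap U_m=\emptyset$ for $|n-m|\ge 2$ (after discarding redundant discs and shrinking, exactly as one thins out an open cover of an arc in $\R$). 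Then $G_0=\bigcup_n U_n$ is an open neighborhood of $J$ inside $V$, but it need not be simply connected, since successive overlaps $U_n\cap U_{n+1}$ may be disconnected.

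The key step is to trim $G_0$ to a simply connected neighborhood. I would argue inductively, building $V_n\supset \varphi[\,a_n,b_n\,]$ (with $a_n\downarrow 0$, $b_n\uparrow 1$) that are Jordan domains and increasing. At each stage I have a Jordan domain $V_n$ and the next coordinate disc $U_{n+1}\cong\D$; inside the coordinate chart of $U_{n+1}$ the set $V_n\cap U_{n+1}$ is open, and it contains the subarc of $J$ running into $U_{n+1}$. I can choose a small Jordan subdomain $Q$ of $U_{n+1}$ that contains the relevant piece of $J$, whose closure lies in $V$, and such that $Q\cap V_n$ is a single Jordan subdomain (shrink $Q$ toward the arc until the overlap with $V_n$ is connected and Jordan — possible because $J$ is locally an analytic-free arc in a disc and $V_n$ is open containing it). Then $V_{n+1}=V_n\cup Q$ is again a Jordan domain: the union of two Jordan domains meeting in a single Jordan domain is simply connected (van Kampen, or directly by a Mayer–Vietoris argument on $\pi_1$). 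Taking $G=\bigcup_n V_n$, an increasing union of simply connected open sets is simply connected, $J\subset G$, and $G\subset V$ by construction.

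The main obstacle is the trimming step — ensuring at each stage that the overlap $Q\cap V_n$ can be arranged to be connected (indeed a single Jordan domain) while still capturing the arc. The delicate point is that $V_n$ is an abstract Jordan domain in $X$ whose trace in the next chart $U_{n+1}$ may wrap around or have several components near the arc; one must shrink $Q$ sufficiently close to the arc that only the component of $Q\cap V_n$ adjacent to $J$ survives, and verify that component is Jordan. I would handle this entirely inside the coordinate disc $U_{n+1}\cong\D$, where $J$ is a closed-in-$\D$ arc disjoint from a neighborhood of its ends, and use a Schoenflies-type normalization to straighten the arc, after which the construction of the shrinking Jordan neighborhood $Q$ with connected overlap is an elementary planar argument. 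Once the lemma is established, one combines it with the Theorem to obtain the conformal extension of functions given on arcs in Riemann surfaces, and, specializing, the folkloric equivalence of the two standard notions of analytic arc in $\C$.
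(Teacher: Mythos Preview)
Your overall strategy---cover $J$ by a chain of coordinate discs and assemble a strip-like simply connected neighborhood---is the same as the paper's, but the execution differs at the crucial point.  The paper parametrizes $J$ by $\R$, picks junction points $t_j$ so that each subarc $J_j=\varphi(t_j,t_{j+1})$ lies in a chart $U_j$, and then builds Jordan domains $V_j$ with $J_j\subset V_j\subset U_j$ whose \emph{closures are pairwise disjoint except for the shared boundary points $\varphi(t_j)$}.  Only after this does it add tiny neighborhoods of the junction points to glue the $V_j$ into a set homeomorphic to the strip $\{x+iy:|y|<1\}$.  Because the pieces touch only at isolated points, the gluing step is trivial: one is just inserting a small disc between two Jordan domains that meet at a single boundary point.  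For the fundamental-system statement the paper simply replaces $X$ by the component of the given neighborhood containing $J$.

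Your version allows genuine overlaps $U_n\cap U_{n+1}$ and then tries to trim inductively so that $Q\cap V_n$ is a single Jordan domain.  This is exactly the step you flag as delicate, and as written it is not justified.  Even granting the chain condition $U_n\cap U_m=\emptyset$ for $|n-m|\ge 2$ (so that $V_n\cap U_{n+1}\subset U_n\cap U_{n+1}$) and a Schoenflies straightening of $J$ inside $U_{n+1}$, the open set $V_n\cap U_{n+1}$ is produced by your own earlier inductive choices and you have no a priori control on its components or boundary.  Shrinking $Q$ toward the arc does not obviously force $Q\cap V_n$ to be connected: $V_n$ may meet any thin tube about the straightened arc in several pieces, since nothing prevents $V_n$ from re-entering $U_n\cap U_{n+1}$ along the arc beyond the point $\varphi(b_n)$.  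To make your scheme work you would need to build in, at each step, a quantitative constraint on how far $V_n$ protrudes into $U_{n+1}$---essentially reinventing the paper's ``disjoint except at endpoints'' construction.  It is cleaner to impose that disjointness from the outset, as the paper does, and then glue at points.
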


\begin{proof}
For simplicity of notation, it will be convenient to consider $J$ as being parametrized by $\R$ rather than $(0,1).$ Thus, $\varphi:(-\infty,+\infty)\rightarrow J.$ We may choose an increasing  sequence $t_j, j\in\Z,$ such that $\lim_{j\rightarrow\pm\infty}=\pm\infty$ and each  $\varphi(t_j,t_{j+1})=J_j$ is contained in a chart $U_j.$ We may construct Jordan domains $V_j,$ such that $J_j\subset V_j\subset U_j,$ the end points of $J_j$ are on $\partial V_j$ and the $\overline V_j$ are disjoint except possibly for end points. By construction, the set $V=\cup_j V_j$ covers $J\setminus\cup_j\{\varphi(t_j)\}.$ By introducing suitable small neighborhoods of the end points $\varphi(t_j),$ we may enlarge $V$ to a ``strip" $S$ which is homeomorphic to $\{z=x+iy:-\infty<x<+\infty, |y|<1\}$ and hence simply connected. If $W$ is a neighborhood of $J,$ we can replace $X$ by the component of $W$ containing $J.$ We have thus shown that $J$ has a fundamental system of simply connected neighborhoods.
\end{proof}

\begin{theorem}
Let $J$ be an open arc in a Riemann surface $X$ and $f:U\rightarrow\overline\C$ a holomorphic mapping on
an open neighborhood $U$ of $J$ in $X.$ Suppose $f|_J$ is injective, $f^\prime(p)\not=0,$ for $p\in J,$ and the sets $f(J)$  and $C(f,J)$ are disjoint. Then, $f$ is injective (one-to-one conformal) on some neighborhood of $J.$
\end{theorem}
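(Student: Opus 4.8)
The plan is to reduce the statement to Theorem 1 by transplanting the whole situation to $\overline\C$ through a simply connected coordinate neighborhood of $J$. First, using the Lemma, I would choose a simply connected open neighborhood $\Omega$ of $J$ with $\Omega\subset U$. By the uniformization theorem, the simply connected Riemann surface $\Omega$ is conformally equivalent to one of $\overline\C,$ $\C,$ or $\D$; in every case there is a conformal homeomorphism $\phi:\Omega\rightarrow\Omega'$ onto a domain $\Omega'$ which is open in $\overline\C.$ Set $J'=\phi(J)$ and $g=f\circ\phi^{-1}:\Omega'\rightarrow\overline\C.$

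Next I would check that $g,$ $J',$ $\Omega'$ satisfy the hypotheses of Theorem 1. Since $\phi$ restricts to a homeomorphism of $J$ onto $J',$ the set $J'$ is an open arc in $\overline\C,$ and if $\varphi$ parametrizes $J$ then $\psi':=\phi\circ\varphi$ parametrizes $J'.$ The map $g$ is holomorphic on the open neighborhood $\Omega'$ of $J'$; it is injective on $J'$ because $f|_J$ and $\phi$ are injective; and $g^\prime(w)\not=0$ for $w\in J',$ since, by the coordinate-free character of the condition $f^\prime\not=0$ adopted above, composing with the conformal map $\phi$ merely amounts to a change of local coordinate on the source. Moreover $g(J')=f(\phi^{-1}(J'))=f(J),$ and since $g\circ\psi'=f\circ\phi^{-1}\circ\phi\circ\varphi=f\circ\varphi$ as maps $I\rightarrow\overline\C,$ the very definition of the cluster set yields $C(g,J')=C(f,J).$ Hence $g(J')$ and $C(g,J')$ are disjoint, and Theorem 1 applies to $g.$

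Finally, Theorem 1 provides a neighborhood of $J'$ in $\overline\C$ on which $g$ is injective; intersecting it with $\Omega'$ we obtain an open set $W'$ with $J'\subset W'\subset\Omega'$ on which $g$ is injective. Then $\phi^{-1}(W')$ is an open neighborhood of $J$ in $X,$ and on it $f=g\circ\phi$ is the composition of the injection $\phi$ with $g|_{W'},$ hence injective; being holomorphic with nonvanishing derivative, $f$ is a one-to-one conformal map there.

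I do not expect a serious obstacle beyond bookkeeping. The one point that genuinely requires care, and that makes the reduction legitimate, is the identity $C(g,J')=C(f,J)$: it holds precisely because the cluster set is computed from the composition $f\circ\varphi$ taking values in the common target $\overline\C,$ so it is unaffected by the conformal change of coordinates on the source. One should also observe that each of the three conformal models of $\Omega$ is (or embeds as) a domain open in $\overline\C,$ so that Theorem 1, whose hypothesis asks for a neighborhood of the arc in $\overline\C,$ is directly applicable.
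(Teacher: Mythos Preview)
Your proof is correct and follows essentially the same route as the paper: use the Lemma to pass to a simply connected neighborhood of $J$, uniformize it to a domain in $\overline\C$, and invoke Theorem~1. The paper's version is a two-line sketch of exactly this reduction, whereas you have carefully spelled out the transfer of hypotheses (notably the identity $C(g,J')=C(f,J)$) that the paper leaves implicit.
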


\begin{proof}
We may assume that $U$ is simply connected and so it is conformally equivalent to $\overline\C,$ to $\C$ or to the unit disc. The case $\overline\C$ is excluded, since $f$ is not constant. The conclusion now follows from the previous theorem.
\end{proof}

We shall now extend our results on arcs to arbitrary sets. Of course, a necessary condition that a function be extendable  conformally is that it be extendable holomorphically. To this end we have the following.

\begin{theorem}\label{holomorphic extension}
Let $X$ be a Riemann surface, $E$ an arbitrary subset  of $X$ and $f:E\rightarrow Y$  a mapping from $E$ to a complex manifold $Y,$ such that, for each $p\in E,$ there is an open neighborhood $U_p\subset X$ of $p$ and a holomorphic mapping $\Phi_p:U_p\rightarrow Y,$ such that $\Phi_p(q)=f(q),$ for all $q\in U_p\cap E.$
Then, $f$ extends to a holomorphic mapping $\Phi:U\rightarrow Y$ on some neighborhood $U$ of $E,$ such that $\Phi$ locally coincides with some $\Phi_p.$
\end{theorem}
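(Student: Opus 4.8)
The statement is essentially a sheaf-gluing result: the local extensions $\Phi_p$ need to be patched into a single holomorphic map on a neighborhood of $E$. The obstruction is that two local extensions $\Phi_p, \Phi_q$ need not agree on the whole overlap $U_p\cap U_q$, only on the part meeting $E$. The strategy is to shrink the charts enough that agreement on $E$ forces agreement everywhere by the identity principle, then take the union.

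\smallskip

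First I would fix, for each $p\in E$, the neighborhood $U_p$ and holomorphic extension $\Phi_p$ given by hypothesis. The key reduction is to replace each $U_p$ by a smaller, connected (indeed simply connected, using the previous lemma's spirit or just a coordinate disc) neighborhood $U_p'\ni p$ with $\overline{U_p'}\subset U_p$, so that $U_p'$ is conformally a disc. Set $U=\bigcup_{p\in E}U_p'$. I want to define $\Phi:U\to Y$ by $\Phi|_{U_p'}=\Phi_p|_{U_p'}$, so the whole content is checking consistency: if $p,q\in E$ and $U_p'\cap U_q'\neq\emptyset$, I must show $\Phi_p=\Phi_q$ on $U_p'\cap U_q'$.

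\smallskip

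For the consistency check, the natural first attempt is: on $U_p'\cap U_q'$ both $\Phi_p$ and $\Phi_q$ restrict to $f$ on the set $(U_p'\cap U_q')\cap E$, and if this set has an accumulation point in $U_p'\cap U_q'$ the identity principle finishes it. The difficulty — and I expect this to be the main obstacle — is that $(U_p'\cap U_q')\cap E$ may be a single point (namely when $E$ is, say, totally disconnected, or when $p,q$ are isolated points of $E$ that happen to lie in a common small disc), so the identity principle does not apply directly. The remedy is to choose the shrunken neighborhoods more carefully so that overlaps are ``anchored'' at a single point of $E$: concretely, for each $p\in E$ pick $U_p'$ to be a coordinate disc centered at $p$ with radius small enough that $\overline{U_p'}\subset U_p$, and additionally arrange (by further shrinking, using that there are only countably many relevant constraints after passing to a locally finite refinement, or by a direct radius estimate) that whenever $U_p'\cap U_q'\neq\emptyset$ one has $p\in U_q$ and $q\in U_p$. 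Then on a neighborhood of $p$ inside $U_p'\cap U_q$, both $\Phi_p$ and $\Phi_q$ are defined and holomorphic and agree with $f$ on $E\cap(\text{that neighborhood})$, which contains $p$; but now use that $\Phi_p$ and $\Phi_q$ are each continuous at $p$ with value $f(p)$, and more: since $\Phi_p = f$ on $U_p\cap E$ and $\Phi_q=f$ on $U_q\cap E$, both equal the germ-defining data at $p$. The cleanest formulation: the germ of $\Phi_p$ at $p$ and the germ of $\Phi_q$ at $p$ both restrict to $f$ on $E$ near $p$; if $p$ is not isolated in $E$ this forces the germs equal by the identity principle, and if $p$ is isolated in $E$ we are free to shrink $U_p'$ so small that it meets no other $U_q'$ at all, so no consistency constraint arises there.

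\smallskip

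With consistency established, $\Phi:U\to Y$ is well defined, and it is holomorphic because it locally coincides with some $\Phi_p$, which is exactly the last clause of the statement; it extends $f$ since for $q\in E$, $q\in U_q'$ and $\Phi(q)=\Phi_q(q)=f(q)$. The only genuinely delicate point in writing this up is the bookkeeping in the shrinking step — making precise that one can choose the $U_p'$ simultaneously so that every nonempty overlap $U_p'\cap U_q'$ is controlled by the identity principle or else disappears; I would handle this either by the ``isolated vs.\ non-isolated point of $E$'' dichotomy above, or, more robustly, by noting that it suffices to prove the theorem with $U_p$ replaced by any fundamental system of neighborhoods of $p$, and then invoking a locally finite refinement of $\{U_p'\}_{p\in E}$ so that only finitely many overlaps need to be simultaneously arranged near any given point.
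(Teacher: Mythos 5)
Your overall strategy --- shrink the neighborhoods so that every overlap is controlled by the identity principle --- is the same as the paper's for the main case, and you correctly isolate the two difficulties. For points $p,q$ at which $E$ accumulates, your argument is essentially the paper's, though your ``anchoring'' condition is stated in a form you cannot actually arrange: requiring \emph{both} $p\in U_q$ and $q\in U_p$ whenever $U_p'\cap U_q'\neq\emptyset$ does not follow from any simple radius bound, whereas the asymmetric version does (if $\mathrm{diam}\,U_q'\le\mathrm{diam}\,U_p'$ and each diameter is less than half the distance from the center to $\partial U_p$, resp.\ $\partial U_q$, then the whole disc $U_q'$ lies in $U_p$). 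The paper anchors at the center of the \emph{smaller} disc for exactly this reason; this also settles the connectivity point you leave implicit, namely that agreement of germs at one point propagates to all of $U_p'\cap U_q'$ only if that set lies in the component of $U_p\cap U_q$ containing the anchor, which the containment $U_q'\subset U_p$ guarantees.

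The genuine gap is your treatment of the points of $E$ at which $E$ does not accumulate. Your claim that for such a point $p$ you are ``free to shrink $U_p'$ so small that it meets no other $U_q'$ at all'' is false in general: the discs $U_q'$ attached to faraway points of $E$ can approach $p$ arbitrarily closely, or even contain $p$, and no positive radius for $U_p'$ avoids them all. (If some $U_q'$ contains $p$ then $\Phi_q(p)=f(p)$ and $p$ is harmlessly absorbed; the irreparable case is when the discs around accumulation points come arbitrarily close to $p$ without containing it, e.g.\ $E=[0,1]\cup\{x_n\}$ with isolated points $x_n$ converging to a point of $[0,1]$ from off the segment. Then $U_p'$ must meet some $U_q'$ at points where $E\cap U_p'\cap U_q'=\emptyset$, the identity principle gives nothing, and $\Phi_p$ and $\Phi_q$ genuinely need not agree there.) The locally finite refinement you invoke as a fallback does not resolve this, since refining shrinks the sets but leaves the same kind of overlap with no accumulation of $E$ in it. The paper's resolution is different and is the idea you are missing: it first builds $\Phi$ on $U'=\bigcup_{p\in E'}D_p$ using only the accumulation points $E'$ of $E$ lying in $E$, then enumerates $E\setminus U'$ as a sequence $p_n$, chooses pairwise disjoint discs $D_n$ around them with radii tending to $0$ and $\overline{D_n}\cap E=\{p_n\}$, and \emph{excises} the closed discs from $U'$, taking $U=[U'\setminus\bigcup_n\overline{D_n}]\cup\bigcup_n D_n$ with $\Phi=\Phi_{p_n}$ on $D_n$. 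No consistency on overlaps is needed because the overlaps are cut away; the only thing to check is that $U$ is still open and still a neighborhood of $E$, which is where the radii tending to zero are used. Without this excision (or an equivalent device) your construction does not yield a well-defined $\Phi$.
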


A form of this theorem was proved in \cite[Th. 5]{G} for the special case of meromorphic functions, that is, when $Y=\overline\C.$ The theorem in \cite{G} is also weaker in the sense that it is not claimed that,  $\Phi$ locally coincides with some $\Phi_p$  but merely that $\Phi(p)=f(p).$ 

\begin{proof} The proof is a modification of an argument taken from \cite{G}. 
Let $E^\prime$ be the set of accumulation points of $E$ which are in $E.$  Choose a distance function on $X$ (see \cite{P}). For $p\in E,$ denote by $r_p$ the distance of $p$ from $\partial U_p.$ 
For each $p\in E^\prime,$ we choose a parametric disc $D_p$ for $X$ at $p,$ such that diam$D_p<r_p/2.$

Claim: for every two such discs $D_p, D_q,$ with $p,q\in E^\prime,$ we have
$$
    \Phi_p(z) = \Phi_q(z), \quad \mbox{for all} \quad z\in D_p\cap D_q.
$$
We may suppose that $D_p\cap D_q\not=\emptyset$ and\\ diam$D_q\le$ diam$D_p.$  Then, $D_q\subset U_p.$
Since $q$ is a limit point of $E,$ and both $\Phi_p$ and $\Phi_q$ equal $f$ on $E\cap D_q,$ it follows that $\Phi_p=\Phi_q$ on the 
component of $U_p\cap U_q$ containing $D_q.$ Obviously, this component contains $D_p\cap D_q$ so the claim follows. 

We have
$$
    E^\prime \subset U^\prime\stackrel{def}=\bigcup_{p\in E^\prime}D_p.
$$
By the claim, we may define a holomorphic function $\Phi$ on the open neighborhood $U^\prime$ of $E^\prime,$ by setting $\Phi=\Phi_p$ on each $D_p, p\in E^\prime.$ Moreover, $\Phi=f$ on $U^\prime\cap E.$

Arrange the points of $E\setminus U^\prime$ in a sequence $p_n.$
Denote by $U_p$ the neighborhood of $p.$ For each $p=p_n,$ choose a  disc $D_n=D_p$ centered at $p$ and contained in $U_p$ so small that the  radius is less than $1/n,$ and such that $E\cap\overline D_p=\{p\}.$ We can also arrange that the discs $D_n$ are pairwise disjoint. For instance, it suffices that the radius of $D_n$ be smaller than $1/2$ the distance of $p_n$ to the rest of $E.$ Set  $\Phi=\Phi_{p_n}$ on $D_n.$ Let $U$ be defined as
$$
    U = [U^\prime\setminus\cup_{p\in E\setminus U^\prime}\overline D_p]\cup\bigcup_{p\in E\setminus U^\prime}D_p.
$$
In order to check that the set U is open  one can use the fact that the
radii of the $D_n$ converge to zero. 
The mapping $\Phi$ is well defined on the neighborhood $U$ of $E$ and has the desired properties.
\end{proof}

Remark 1. In the proof of Theorem 3 the function $\Phi$ locally coincides
with some $\Phi_p.$ Therefore, if we assume that the derivative of $\Phi_p$ at $p$ is non zero, then we can consider smaller open sets $U_p$ so
that the derivative of $\Phi_p$ is everywhere non zero. It follows that
the derivative of $\Phi$ is non zero everywhere on $U$ and for every $z$ in $U$ the
mapping $\Phi$ is locally a homeomorphism between two open sets containing $z$ and $\Phi
(z) ,$ respectively.

A {\bf holomorphic curve} in a complex manifold $Y$ is a nonconstant holomorphic mapping $\Phi:X\rightarrow Y$  from a Riemann surface $X$ into $Y.$

\begin{corollary}
Let $X$ be a Riemann surface, $E$ a connected subset of $X$ and $f:E\rightarrow Y$  a mapping from $E$ to a complex manifold $Y,$ such that, for each $p\in E,$ there is an open neighborhood $U_p\subset X$ of $p$ and a holomorphic mapping $\Phi_p:U_p\rightarrow Y,$ such that $\Phi_p(q)=f(q),$ for all $q\in U_p\cap E.$
Then, $f$ extends to a holomorphic curve $\Phi:V\rightarrow Y$ mapping some connected neighborhood $V$ of $E$ into $Y,$  such that $\Phi$ locally coincides with some $\Phi_p.$ 
\end{corollary}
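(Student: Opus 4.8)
The plan is to deduce this directly from Theorem \ref{holomorphic extension}, with only a small amount of bookkeeping added. First I would apply that theorem to $X$, $E$ and $f$ (the hypotheses of the Corollary are exactly those of Theorem \ref{holomorphic extension}), obtaining an open neighborhood $U$ of $E$ in $X$ and a holomorphic mapping $\Phi:U\rightarrow Y$ that extends $f$ and locally coincides with some $\Phi_p$. I would also note, from the explicit construction in the proof of Theorem \ref{holomorphic extension}, that near each point $p\in E$ the extension $\Phi$ actually coincides with $\Phi_p$ itself (on the disc $D_p$); this will be used to control non-constancy.

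Next I would use connectedness of $E$: since $E$ is connected and contained in the open set $U$, it lies entirely in a single connected component $V$ of $U$. As $V$ is an open connected subset of the Riemann surface $X$, it is itself a Riemann surface, and it is a connected neighborhood of $E$. I would then propose $\Phi|_V:V\rightarrow Y$ as the desired extension: it restricts to $f$ on $E$, and it still locally coincides with some $\Phi_p$ at each point of $V$, inherited from the corresponding property of $\Phi$ on $U$.

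The only step with any content beyond Theorem \ref{holomorphic extension} is checking that $\Phi|_V$ is \emph{nonconstant}, so that it is genuinely a holomorphic curve; this is the (minor) main obstacle. I would argue by contraposition: if $\Phi$ were constant on $V$, then, since near each $p\in E$ the map $\Phi$ agrees with $\Phi_p$, each such $\Phi_p$ would be constant, whence $f=\Phi_p$ on $U_p\cap E$ shows $f$ is locally constant on $E$; as $E$ is connected, $f$ would then be constant. Consequently, whenever $f$ is nonconstant — the only situation in which the statement is of interest — the map $\Phi|_V$ is nonconstant and is therefore a holomorphic curve on the connected neighborhood $V$ of $E$, which completes the proof. (In the degenerate case where $f$ is constant one may, if desired, simply discard it, or replace the local extensions by nonconstant ones.)
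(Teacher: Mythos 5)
Your proposal is correct and follows essentially the same route as the paper: apply Theorem \ref{holomorphic extension} and take the connected component $V$ of $U$ containing the connected set $E$. Your extra care about nonconstancy is in fact a point the paper's one-line proof silently skips (its definition of holomorphic curve requires the map to be nonconstant, which can fail exactly when $f$ is constant), so your caveat is a legitimate refinement rather than a digression.
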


\begin{proof}
In Theorem \ref{holomorphic extension},  let $V$ be the component of $U$ containing $E.$ Then, $V$ is a Riemann surface and so $\Phi:V\rightarrow Y$ is, by definition, a holomorphic curve.
\end{proof}

The Corollary applies, in particular, to the case that $E$ is an open arc.

In order to extend a function, not only holomorphically, but even biholomorphically,  the following lemma \cite[Lemma 3.6]{M} is helpful. 

\begin{lemma} Let $U, Y$ be Hausdorff spaces with countable bases and $U$ be locally compact. 
If $\Phi : U \rightarrow Y$ is a local homeomorphism 
and the restriction of $\Phi$ to a closed subset $E$ is a homeomorphism, then $\Phi$
is a homeomorphism on some neighbourhood $V$ of $E.$
\end{lemma}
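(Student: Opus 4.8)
The plan is to reduce the whole statement to the single assertion that $\Phi$ is \emph{injective} on some open neighbourhood $V$ of $E$. Granting this, $\Phi|_V$ is an injective local homeomorphism, hence an open continuous bijection onto the open set $\Phi(V)\subseteq Y$, hence a homeomorphism; this is exactly the conclusion of the lemma. So from here on the only goal is to manufacture such a $V$.

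First I would fix the set-up. Being locally compact, Hausdorff and second countable, $U$ is metrizable and $\sigma$-compact: choose a metric $d$ and an exhaustion $\emptyset=K_0\subseteq\operatorname{int}K_1\subseteq K_1\subseteq\operatorname{int}K_2\subseteq\cdots$ with $\bigcup_nK_n=U$, and set $L_n=K_n\setminus\operatorname{int}K_{n-1}$, so the $L_n$ are compact, cover $U$, and each $E\cap L_n$ is compact. A neighbourhood of $\Phi(E)$ in $Y$, being locally homeomorphic to $U$, is metrizable; fix a metric $d_Y$ on it. I would also record that the ``bad set'' $N=\{(x,y)\in U\times U:x\neq y,\ \Phi(x)=\Phi(y)\}$ is closed in $U\times U$ (its closure cannot touch the diagonal, because $\Phi$ is locally injective) and is disjoint from $E\times E$ (because $\Phi|_E$ is injective). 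The target is thus an open $V\supseteq E$ with $(V\times V)\cap N=\emptyset$.

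The easy half is a localisation on compacta. For each $n$ a routine compactness argument produces an open $V_n$ with $E\cap L_n\subseteq V_n\subseteq\operatorname{int}K_{n+1}\setminus K_{n-2}$, $\overline{V_n}$ compact, and $\Phi|_{V_n}$ injective: were this impossible, one would obtain $x_m\neq y_m$ with $\Phi(x_m)=\Phi(y_m)$ clustering on the compact set $E\cap L_n$, and a limit point of $(x_m,y_m)$ would lie in $N\cap(E\times E)=\emptyset$. The localisation $V_n\subseteq\operatorname{int}K_{n+1}\setminus K_{n-2}$ forces $\overline{V_n}\cap\overline{V_{n'}}=\emptyset$ whenever $|n-n'|\ge4$, so only neighbouring pieces can interact ``at short range''.

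The hard half — and I expect the globalisation to be the main obstacle — is to shrink the $V_n$ to open sets $V_n'\subseteq V_n$, still containing $E\cap L_n$, so that $\Phi$ is injective on $V=\bigcup_nV_n'$; injectivity near the separate compact pieces does not glue for free. Short-range interactions are handled by another compactness argument: for fixed $n$ there are only finitely many $n'$ with $|n-n'|\le3$, and for each such pair there is $\epsilon>0$ so that, on $\overline{V_n}\times\overline{V_{n'}}$, no point of $N$ has its first coordinate within $\epsilon$ of $E\cap L_n$ and its second within $\epsilon$ of $E\cap L_{n'}$ (else a limit point lands in $N\cap(E\times E)$); let $\epsilon_n>0$ be the minimum of these finitely many $\epsilon$'s. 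Long-range interactions are where the hypothesis that $\Phi|_E$ is a \emph{homeomorphism}, not merely an injection, is indispensable: for $|n-n'|\ge4$ the compacta $\Phi(E\cap L_n)$ and $\Phi(E\cap L_{n'})$ are disjoint, hence at positive $d_Y$-distance, and the crucial claim is that $\delta_n:=\inf_{|n'-n|\ge4}\operatorname{dist}_{d_Y}\!\big(\Phi(E\cap L_n),\Phi(E\cap L_{n'})\big)>0$. If $\delta_n=0$, a subsequence would give points $c_k\in E\cap L_{n_k'}$ with $n_k'\to\infty$ whose images $\Phi(c_k)$ converge into the compact set $\Phi(E\cap L_n)\subseteq\Phi(E)$; continuity of $(\Phi|_E)^{-1}$ on $\Phi(E)$ would then force $c_k$ to converge in $E$, contradicting that the $c_k$ eventually leave every $K_j$. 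Granting $\delta_n>0$, I would set
$$
V_n'=\{x\in V_n:\ d(x,E\cap L_n)<\epsilon_n\ \text{and}\ d_Y(\Phi(x),\Phi(E\cap L_n))<\delta_n/2\},
$$
whose last condition makes $\Phi(V_n')$ and $\Phi(V_{n'}')$ disjoint for $|n-n'|\ge4$. A short case check on the indices $n,n'$ of two points of $V$ with equal image — same piece, neighbouring pieces, far pieces — then shows $\Phi|_V$ is injective, which finishes the proof. One can see the homeomorphism hypothesis is needed, not just for this argument: there is a closed set $E\subseteq\C$ on which $\exp$ is injective but not a homeomorphism onto its image, and near which $\exp$ is not injective.
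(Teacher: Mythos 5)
The paper does not actually prove this lemma --- it is quoted verbatim from Milnor's notes \cite{M} (Lemma 3.6), with an acknowledgement to Stout for pointing it out --- so there is no in-paper argument to compare yours against; what you have written is a correct, self-contained proof. The reduction to injectivity on a neighbourhood is right (an injective local homeomorphism is open, hence a homeomorphism onto its open image), and the two-scale scheme carries through: the exhaustion pieces $L_n$, the closedness of $N=\{(x,y): x\neq y,\ \Phi(x)=\Phi(y)\}$ (its closure avoids the diagonal by local injectivity) together with $N\cap(E\times E)=\emptyset$, and the compactness arguments producing the $V_n$ and the short-range constants $\epsilon_n$ are all sound; the only bookkeeping point is to take the pair constants $\epsilon(n,n')$ symmetric in the two indices so that $\epsilon_n$ and $\epsilon_{n'}$ jointly exclude a point of $N$, which is trivial to arrange. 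The genuinely important step is the long-range estimate, and you have located the role of the hypothesis exactly: positivity of $\delta_n$ is forced by continuity of $(\Phi|_E)^{-1}$ together with the fact that points of $E\cap L_{n'}$ escape every compactum as $n'\to\infty$, and this is precisely what fails when $\Phi|_E$ is merely injective (e.g.\ $\exp$ on the closed discrete set $\{0\}\cup\{i(2\pi k-1/k): k\geq 1\}$, which your closing remark alludes to). The remaining details you elide are harmless: empty pieces $E\cap L_n$ are simply skipped, $\delta_n$ may be taken to be any positive number when its defining index set is empty, and the metric $d_Y$ exists on all of $\Phi(U)$ because that set is open in $Y$, locally compact, Hausdorff and second countable, hence metrizable. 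I would accept this as a complete proof of the cited lemma.
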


Let $E$ be a subset of a Riemann surface $X,$ and let $f:E\rightarrow Y.$  For a point $p\in X,$ we define the cluster set $C(f,p)$ of $f$ at $p$ as the set of all values $w\in Y,$ such that there is a sequence $z_n\in E, z_n\rightarrow p,$ for which $f(z_n)\rightarrow w.$

\begin{theorem}\label{X to Y} Let $X$ and $Y$ be Riemann surfaces and $E$ be an arbitrary subset  of $X.$
Suppose, for a function $f:E\rightarrow Y,$ that the cluster sets $C(f,p), p\in X,$  are pairwise disjoint and, for each $p\in E,$ there is an open neighborhood $U_p\subset X$ of $p$ and a holomorphic mapping $\Phi_p:U_p\rightarrow Y,$ such that $\Phi_p(q)=f(q),$ for all $q\in U_p\cap E$ and $\Phi_p^\prime(p)\not=0.$   Then $f$ extends to a  one-to-one conformal mapping  of  some open neighborhood $V$ of $E$ onto an open subset of $Y.$
\end{theorem}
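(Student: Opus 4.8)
The plan is to use Theorem~\ref{holomorphic extension} together with Remark~1 to obtain a holomorphic, locally biholomorphic extension of $f$, and then to upgrade local injectivity to injectivity on a whole neighborhood of $E$ by means of \cite[Lemma~3.6]{M}; the disjointness of the cluster sets is exactly what makes that lemma applicable.

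First, I would apply Theorem~\ref{holomorphic extension} to obtain a holomorphic map $\Phi\colon U\to Y$ on an open neighborhood $U$ of $E$, extending $f$ and locally coinciding with the given maps $\Phi_p$. Since $\Phi_p'(p)\neq0$ for $p\in E$, Remark~1 lets us shrink the $U_p$ so that $\Phi'$ vanishes nowhere on $U$; hence $\Phi$ is a local homeomorphism. As $U$ and $Y$ are Riemann surfaces, they are Hausdorff, second countable and locally compact, so in order to invoke \cite[Lemma~3.6]{M} it remains only to exhibit a closed subset of $U$ on which $\Phi$ restricts to a homeomorphism.

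The set to use is $F=\overline{E}^{\,U}$, the closure of $E$ in $U$; it is closed in $U$ and contains $E$. For each $p\in F$ one has $\Phi(p)\in C(f,p)$: the constant sequence handles $p\in E$, and for $p\in F\setminus E$ one takes $q_k\in E$ with $q_k\to p$ and notes $\Phi(p)=\lim\Phi(q_k)=\lim f(q_k)$. Since the cluster sets $C(f,p)$, $p\in X$, are pairwise disjoint and $F\subset X$, distinct points of $F$ have distinct images under $\Phi$, so $\Phi|_F$ is injective. For the continuity of $(\Phi|_F)^{-1}$, suppose $z_n\in F$ with $\Phi(z_n)\to\Phi(z_0)$, $z_0\in F$; choosing $w_n\in E$ with $w_n$ close to $z_n$ and $f(w_n)=\Phi(w_n)\to\Phi(z_0)$, every subsequential limit $z^\ast\in X$ of $(z_n)$ satisfies $\Phi(z_0)\in C(f,z^\ast)\cap C(f,z_0)$ and hence equals $z_0$; therefore $z_n\to z_0$. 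Thus $\Phi|_F$ is a homeomorphism onto its image.

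Then \cite[Lemma~3.6]{M}, applied to the local homeomorphism $\Phi\colon U\to Y$ and the closed set $F$, would furnish an open neighborhood $V$ of $F$, hence of $E$, on which $\Phi$ is a homeomorphism; being holomorphic with nonvanishing derivative, $\Phi|_V$ is then a one-to-one conformal map of $V$ onto the open set $\Phi(V)\subset Y$, and it extends $f$, which is the conclusion. I expect the main obstacle to be the previous step, namely checking that $\Phi|_F$ is a homeomorphism onto its image, as required by \cite[Lemma~3.6]{M}. Injectivity is immediate from the disjointness of the cluster sets and the observation that $\Phi(p)\in C(f,p)$ for $p\in F$; the delicate point is continuity of the inverse, which amounts to controlling sequences in $F$ whose $\Phi$-images converge and translating that control, via nearby points of $E$, into a statement about cluster sets at points of $X$ — transparent when $\overline{E}$ is compact, and requiring a little more care in general.
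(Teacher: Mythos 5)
Your proposal opens exactly as the paper does --- Theorem~\ref{holomorphic extension} together with Remark~1 gives a locally homeomorphic holomorphic extension $\Phi$ on a neighborhood $U$ of $E,$ and the pairwise disjointness of the cluster sets is then used to make $\Phi$ injective on a closed set so that \cite[Lemma 3.6]{M} applies --- but it diverges genuinely in how it reaches that closed set. The paper invokes the Milnor lemma only in the case where $E$ itself is closed; when it is not, the paper instead manufactures a global holomorphic inverse $\Psi$ on a neighborhood of $f(E)$ by choosing local branches $\Psi_w$ of $\Phi^{-1}$ and gluing them through a second application of Theorem~\ref{holomorphic extension} to the map $f^{-1}\colon f(E)\rightarrow X.$ You bypass all of that by passing to $F=\overline{E}\cap U,$ which is closed in $U,$ observing that $\Phi(p)\in C(f,p)$ for every $p\in F$ and not merely for $p\in E,$ and exploiting the fact that the disjointness hypothesis is imposed at all points of $X.$ A single application of the Milnor lemma then finishes. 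This is a real simplification: it removes the second gluing argument entirely and makes transparent why the cluster sets must be assumed disjoint for all $p\in X$ rather than only for $p\in E.$

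The step you flag as delicate --- continuity of $(\Phi|_F)^{-1}$ --- is in fact the weak point of the paper's own proof as well, and your compactness caveat is not cosmetic. Knowing that every subsequential limit in $X$ of $(z_n)$ equals $z_0$ yields $z_n\rightarrow z_0$ only when the sequence eventually lies in a compact subset of $X$; if $E$ runs off to the ideal boundary of $X$ this can fail. For instance, with $X=Y=\C,$ $E=\{0\}\cup\{n+i/n: n\ge 1\},$ $f(0)=0$ and $f(n+i/n)=1/n,$ the cluster sets $C(f,p),$ $p\in\C,$ are pairwise disjoint singletons (or empty), yet $f^{-1}$ is discontinuous at $0$ and no injective holomorphic extension to a neighborhood of $E$ can exist, since any extension is an open map at $0$ and therefore takes the value $1/n$ near $0$ for large $n.$ So neither your argument nor the paper's closes this step without an additional hypothesis (for example that $\overline{E}$ be compact, or that disjointness of cluster sets be required on a compactification of $X$); apart from that shared issue, which you correctly isolated as the crux, your route is sound and arguably cleaner than the paper's.
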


\begin{proof} By Theorem 3, $f$ extends to a holomorphic mapping
into $Y.$ According to Remark 1, for every $z$ in $U,$ the mapping $\Phi$ is locally a
homeomorphism between two open sets containing $z$ and $\Phi(z),$ respectively.
Set $F=f(E).$ We claim that $f:E\rightarrow F$ is a homeomorphism. First of all, $f$ is continuous, because it is the restriction of the continuous function $\Phi.$ The continuity of $f$ implies that $C(f,z)=f(z),$ for all $z\in E.$ The hypothesis on cluster sets therefore implies that $f$ is injective and hence has an inverse function $\psi:F\rightarrow E.$ We claim that $\psi$ is continuous. 
To see this, let $b=f(a)$ be a point of $F$ and suppose $w_n=f(z_n)$ converges to $b.$ Let $a^\prime$ be any limit point of $\psi(w_n)=z_n.$ Then, from the definition of $C(f,a^\prime),$ it follows that $b\in C(f,a^\prime).$ Since $b$ is also in $C(f,a),$ the hypothesis on cluster sets implies that $a^\prime=a.$ We have shown that $\psi(w_n)\rightarrow a=\psi(b).$ This confirms the claim that $\psi$ is continuous and also that $f$ is a homeomorphism.

If $E$ is closed, the theorem now follows from Lemma 2. 

In general, fix some distance function on $Y.$
For each $z\in E,$ we may choose an open neighborhood $\widetilde U_z$ such that $\Phi(\widetilde U_z)$ is a disc $D_w$ centered at $w=f(z)$ and we may choose a branch $\Psi_w$ of $\Phi^{-1}$ in $D_w$ such that $\Psi_w\circ\Phi$ is the identity on $\widetilde U_z.$
We claim that $\Psi_w=\psi$ on $F\cap D_w.$ To verify the last claim, suppose $b\in F\cap D_w.$ From the definition of $\Psi_w,$ there is a point $a\in E\cap \widetilde U_z,$ such that $f(a)=b$ and $a=\Psi_w(b).$   Since $f$ is injective, $a=f^{-1}(b)=\psi(b).$ We have shown that $\Psi_w(b)=a=\psi(b),$ which establishes the claim.  By Theorem \ref{holomorphic extension}, there is an open neighborhood $W$ of $F$ and a holomorphic mapping $\Psi:W\rightarrow X,$ such that $\Psi=\Psi_w$ on $D_w,$ for each $w\in F.$ 
The mapping $\Psi$ is holomorphic on $W$ and maps $W$ to an open neighborhood $V$ of $E,$ contained in the domain of definition of $\Phi.$ Moreover, $\Psi\circ\Phi$ is the identity on $V.$ Thus, $\Phi$ is biholomorphic from $V$ to $W$ and $\Phi$ restricted to $E$ is $f.$ This finishes the proof.
\end{proof}

We remark that if $E$ has no isolated points, then the extension in Theorem 4 is unique in the sense that any two extensions agree on some neighborhood of $E.$ In particular, this applies to the case that $E$ is a curve.

\begin{definition}\label{analytic}[analytic arc]
An analytic open arc $J$ in a Riemann surface $X$ is an arc in $X,$ whose parametrization $\varphi$ is analytic. Thus, for every $t_0\in I=(0,1)$ and local coordinate $z$ in a neighborhood of $\varphi(t_0),$ the function $z\circ\varphi$ has a representation as a power series near $t_0:$  $(z\circ\varphi)(t)=\sum a_j(t-t_0)^j.$ We shall say that $J$ is a regular open analytic arc if $\varphi^\prime(t)\not=0,$ for all $t\in(0,1).$
\end{definition}

\begin{definition}\label{conformal}[conformal arc]
A conformal open arc $J$ in a Riemann surface is an arc in $X,$ with a parametrization which extends to a one-to-one conformal mapping from a neighborhood of $I\subset\C$ into $X.$
\end{definition}

\begin{corollary}
An open arc $J$ in a Riemann surface $X$ is a regular analytic arc if and only if it is a conformal arc.
\end{corollary}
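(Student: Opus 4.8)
The plan is to prove the two implications separately, using the previously established Theorem \ref{X to Y} for the nontrivial direction.

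\medskip
\noindent\textbf{The easy direction.} Suppose $J$ is a conformal arc. By Definition \ref{conformal}, its parametrization $\varphi$ extends to a one-to-one conformal map $\Phi$ from a neighborhood of $I\subset\C$ into $X$. A one-to-one conformal map has nowhere-vanishing derivative, so in particular $\varphi'(t)=\Phi'(t)\neq 0$ for $t\in I$, and $\varphi$ is analytic on $I$ since $\Phi$ is holomorphic there. Hence $J$ is a regular analytic arc; this direction requires essentially no work.

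\medskip
\noindent\textbf{The hard direction.} Suppose $J$ is a regular analytic arc, so $\varphi:I\to J$ is analytic with $\varphi'(t)\neq 0$ for all $t\in I$. I want to apply Theorem \ref{X to Y} with the roles reversed: take the source Riemann surface to be (an open subset of) $\C$, let $E=I\subset\C$, let $Y=X$, and let $f=\varphi:I\to X$. I must verify the hypotheses. For each $t\in I$, analyticity of $\varphi$ at $t$ means $z\circ\varphi$ has a convergent power series near $t$ in any local coordinate $z$; this power series has positive radius of convergence, hence defines a holomorphic extension $\Phi_t$ on a neighborhood $U_t\subset\C$ of $t$ with $\Phi_t=\varphi$ on $U_t\cap I$ and $\Phi_t'(t)=\varphi'(t)\neq 0$. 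The remaining hypothesis is that the cluster sets $C(\varphi,t)$, $t\in\C$, are pairwise disjoint. For $t\in I$ continuity of $\varphi$ gives $C(\varphi,t)=\{\varphi(t)\}$, and since $\varphi$ is injective these singletons are pairwise disjoint. For $t\notin I$ one checks that $C(\varphi,t)\subset C(\varphi,I)$ in the cluster-set sense used earlier, or more simply that $C(\varphi,t)$ is contained in one of the ends of $J$; the point is that these cluster sets contain no point of $J=\varphi(I)$, because any limit point $\varphi(s)$ of a sequence $\varphi(s_n)$ with $s_n\to t\notin I$ would force $s_n\to s$ by continuity of $\varphi^{-1}$, contradicting $s_n\to t$. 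Thus $C(\varphi,t)$ for $t\notin I$ is disjoint from all the $C(\varphi,s)=\{\varphi(s)\}$, $s\in I$, and these are the only cluster sets that need to be compared with the points of $E=I$.

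\medskip
\noindent The main obstacle is precisely this verification of pairwise-disjointness of all cluster sets, since Theorem \ref{X to Y} as stated requires disjointness over \emph{all} $p\in X$ (here all $p\in\C$), not just $p\in E$; one must argue that the cluster sets at points of $\partial I=\{0,1\}$ and at points outside $\overline I$ (which are empty) do not interfere. The case $t\notin\overline I$ gives $C(\varphi,t)=\emptyset$, which is trivially disjoint from everything. For $t\in\{0,1\}$ one uses the homeomorphism property of $\varphi$ onto $J$ with the relative topology — exactly as in the proof of the first Theorem, where the continuity of $\psi^{-1}$ was established — to see that $C(\varphi,0)$ and $C(\varphi,1)$ are contained in the initial and terminal ends $J(0)$ and $J(1)$ respectively, which are disjoint from $J$; and $C(\varphi,0)\cap C(\varphi,1)=\varnothing$ is automatic because a sequence in $I$ cannot converge to both $0$ and $1$. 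Once all hypotheses are checked, Theorem \ref{X to Y} produces an open neighborhood $V$ of $I$ in $\C$ and a one-to-one conformal extension $\Phi:V\to X$ of $\varphi$, which is exactly what Definition \ref{conformal} requires for $J$ to be a conformal arc. This completes the proof.
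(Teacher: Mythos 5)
Your overall strategy is the same as the paper's: extend $\varphi$ locally by its power series and apply Theorem \ref{X to Y} to $f=\varphi$ with $E=I$. (The paper first reduces the general case to $X=\overline\C$ via Lemma 1 and the Riemann mapping theorem before invoking that theorem; your direct application with target $X$ is legitimate, since Theorem \ref{X to Y} allows an arbitrary Riemann surface as target.) The easy direction is fine.

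There is, however, one genuinely false step in your verification of the hypotheses: the claim that $C(\varphi,0)\cap C(\varphi,1)=\varnothing$ ``is automatic because a sequence in $I$ cannot converge to both $0$ and $1$.'' The two cluster sets are computed from \emph{different} sequences, so nothing prevents them from sharing a limit value, and in general they do: take $\varphi(t)=e^{2\pi i t}-1$, a regular analytic parametrization of the circle $|w+1|=1$ punctured at the origin; here $C(\varphi,0)=C(\varphi,1)=\{0\}$, yet $J$ is a perfectly good regular analytic (and conformal) arc. For such an arc the pairwise-disjointness hypothesis of Theorem \ref{X to Y} fails as you have set things up, with $0$ and $1$ lying in the source, so the theorem cannot be invoked. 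The gap is repairable in either of two ways: (i) take the source Riemann surface to be an open subset of $\C$ containing $I$ but not $0$ or $1$ (for instance the strip $0<\mathrm{Re}\,z<1$, or $\C\setminus\{0,1\}$); then $I$ is closed in the source, every cluster set at a point of the source is either a singleton $\{\varphi(t)\}$ or empty, and the hypothesis holds trivially; or (ii) fall back on Theorem 1, whose hypothesis only requires $f(J)\cap C(f,J)=\varnothing$ and whose proof explicitly treats the case where the two end cluster sets meet. (The paper itself is terse at exactly this point --- it records only that the end cluster sets are disjoint from $J$, not from each other --- so one of these repairs is implicitly needed there as well; but the paper does not assert the false ``automatic'' disjointness that your write-up does.)
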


\begin{proof}
Clearly, if $J$ is a conformal arc, then it is a regular analytic arc.

Suppose, conversely, that $J$ is a regular analytic arc. Consider firstly the case that $X=\overline\C.$
Thus, there is a parametrization $\varphi:I\rightarrow J$ which is analytic and such that $\varphi^\prime(t)\not=0,$ for $t\in I.$  For each $t\in I,$ we may extend $\varphi$ to a holomorphic function $\varphi_t$ in a disc $D_t$ centered at $t$ and disjoint from $\{0,1\}.$ If $D_s\cap D_t\not=\emptyset,$ then $\varphi_s=\varphi=\varphi_t$ on $I\cap D_s\cap D_t.$ Thus, $\varphi_s=\varphi_t$ on $D_s\cap D_t.$ By setting $\varphi=\varphi_t$ on $D_t,$ for each $t\in I,$ we obtain a well-defined holomorphic extension of $\varphi$ to the open neighborhood $U=\cup_{t\in I}D_t$ of $I.$

We note that the initial and terminal ends of $J$ are the same as the initial and terminal cluster sets $C_o(\varphi,I)$ and $C_1(\varphi,I).$  Since $\varphi:I\rightarrow J$ is a homeomorphism, these cluster sets are disjoint from $J.$ It follows directly from Theorem \ref{X to Y} that $J$ is a conformal arc. This completes the proof of the converse, in case $X=\overline\C.$

In the general case, it follows from Lemma 1, that there is a simply connected neighborhood $U$ of $J$ in $X.$ By the Riemann mapping theorem, $U$ is conformally equivalent to an open subset of $\C,$ and so it follows from the case just treated that $J$ is a conformal arc.
\end{proof}

This equivalence, stated in the corollary, is probably folkloric, at least in case the analytic arc is the image of the {\em closed} rather than open unit interval or in case we have a Jordan curve. Osserman \cite{O}  says that an analytic Jordan curve
on a Riemann surface $X$ is defined by an analytic mapping of the unit circle into $X$ and this mapping extends to a conformal mapping of an annulus into $X.$ In these cases, one can use compactness to give a simpler proof than the one we have given.

{\it We thank Lee Stout for bringing Lemma 2 to our attention.}

\end{document}